\begin{document}
\title[Wedderburn's basis]{Categorification
of Wedderburn's\\ basis  for  $\mathbb{C}[S_n]$}

\author{Volodymyr Mazorchuk}
\address{V. M.: Department of Mathematics, Uppsala University (Sweden).}
\email{mazor\symbol{64}math.uu.se}
\author{Catharina Stroppel}
\address{C. S.: Department of Mathematics, Glasgow University
(United Kingdom).} \email{c.stroppel\symbol{64}maths.gla.ac.uk}
\thanks{The first author was partially supported by STINT,
the Royal Swedish Academy of Sciences, and the Swedish Research Council,
the second author by EPSRC}

\numberwithin{equation}{section}

\newtheorem{proposition}{Proposition}
\newtheorem{lemma}[proposition]{Lemma}
\newtheorem{corollary}[proposition]{Corollary}
\newtheorem{theorem}[proposition]{Theorem}
\newtheorem{definition}[proposition]{Definition}
\newtheorem{conjecture}[proposition]{Conjecture}
\newtheorem{example}[proposition]{Example}
\newtheorem{remark}[proposition]{Remark}
\newcommand{\oplusop}[1]{{\mathop{\oplus}\limits_{#1}}}
\newcommand{\oplusoop}[2]{{\mathop{\oplus}\limits_{#1}^{#2}}}
\newcommand{\cS}{\mathcal{S}}
\newcommand{\cH}{\mathcal{H}}
\newcommand{\ff}{\footnote}
\renewcommand{\to}{\rightarrow}
\newtheorem{theoremintro}{Theorem}
\renewcommand{\thetheoremintro}{\Roman{theoremintro}}

\font\sc=rsfs10 at 12 pt
\font\scs=rsfs10 at 10 pt
\font\scb=rsfs10 at 16 pt
\font\scbb=rsfs10 at 18 pt

\newcommand{\ccC}{\mathscr{C}}
\newcommand{\ccS}{\mathscr{S}}

\def\la{\lambda}
\def\op{\operatorname}
\def\C{\mathbb C}
\def\R{\mathbb R}
\def\N{\mathbb N}
\def\Z{\mathbb Z}
\def\Q{\mathbb Q}
\def\g{\mathfrak g}
\def\p{\mathfrak p}
\def\h{\mathfrak h}
\def\n{\mathfrak n}
\def\mmm{\mathbf m}
\def\mmn{\mathbf n}
\newcommand{\mC}{\mathbb{C}}
\newcommand{\Ext}{\operatorname{Ext}}
\newcommand{\End}{\operatorname{End}}

\newcommand{\add}{\operatorname{add}}
\newcommand{\Ann}{\operatorname{Ann}}

\def\F{\mathbb F}
\def\S{\mathbb S}
\def\l{\lbrace}
\def\r{\rbrace}
\def\o{\otimes}
\def\lra{\longrightarrow}
\newcommand{\ba}{\mathbf{a}}
\newcommand{\cA}{\mathcal{A}}
\newcommand{\cB}{\mathcal{B}}
\newcommand{\cL}{\mathcal{L}}
\newcommand{\dmod}{\mathrm{-mod}}
\newcommand{\gmod}{\mathrm{-gmod}}
\newcommand{\mc}{\mathcal}
\newcommand{\mZ}{\mathbb{Z}}
\newcommand{\tto}{\twoheadrightarrow}
\newcommand{\mg}{\mathfrak{g}}
\newcommand{\mh}{\mathfrak{h}}
\newcommand{\ma}{\mathfrak{a}}
\newcommand{\mb}{\mathfrak{b}}
\newcommand{\cU}{\mathcal{U}}
\def\cF{\mathcal{F}}
\def\Hom{\textrm{Hom}}
\def\drawing#1{\begin{center} \epsfig{file=#1} \end{center}}
\def\mc{\mathcal}
\def\mf{\mathfrak}
\def\mb{\mathbb}

\def\yesnocases#1#2#3#4{\left\{ \begin{array}{ll} #1 & #2 \\ #3 & #4
\end{array} \right. }

\newcommand{\define}{\stackrel{\mbox{\scriptsize{def}}}{=}}
\def\hsm{\hspace{0.05in}}

\def\cO{\mathcal{O}}   
\def\cC{\mathscr{C}}
\def\sln{\mathfrak{sl}(n)}

\begin{abstract}
M.~Neunh{\"o}ffer studies in \cite{Ne} a certain basis of $\mathbb{C}[S_n]$
with the origins in \cite{Lu} and shows that this basis is in fact Wedderburn's
basis, hence decomposes the right regular representation of $S_n$
into a direct sum of irreducible representations (i.e. Specht or
cell modules). In the present paper we rediscover essentially the same basis
with a categorical origin coming from projective-injective modules in certain
subcategories of the BGG-category $\mathcal{O}$. Inside each of these
categories, there is a dominant projective module which plays a crucial role
in our arguments and will additionally be used to show that {\it Kostant's
problem} (\cite{Jo}) has a negative answer for some simple highest weight
module over the Lie algebra $\mathfrak{sl}_4$. This disproves the general
belief that Kostant's problem should have a positive answer for all simple
highest weight modules in type $A$.
\end{abstract}
\maketitle


\section{The main result}\label{s1}

Let $n$ be a positive integer and $S_n$ the group of permutations of
the elements from $\{1,2,\dots,n\}$. Denote by $\cS$ the usual set of Coxeter
generators of $S_n$ and by $\mathcal{H}=\mathcal{H}(S_n,\cS)$ the
associated (generic) Iwahori-Hecke algebra. The algebra
$\mathcal{H}$ is a free $\mathbb{Z}[v,v^{-1}]$-module with basis
$\{H_w\vert w\in S_n\}$ and multiplication given by
\begin{displaymath}
H_xH_y=H_{xy} \text{ if } l(x)+l(y)=l(xy) \,\text{ and }\,
H_s^2=H_e+(v^{-1}-v)H_s \text{ for }s\in S,
\end{displaymath}
where $l:S_n\to\mathbb{Z}$ denotes the length function with respect
to $\cS$. Denote by $\{\underline{H}_w\vert w\in S_n\}$ the {\it
Kazhdan-Lusztig basis} (in the normalization of \cite{So}). We also
denote by $\{\hat{\underline{H}}_w \vert w\in S_n\}$ the dual
Kazhdan-Lusztig  basis of $\mathcal{H}$, defined via
$\tau(\hat{\underline{H}}_v\underline{H}_{w^{-1}})=\delta_{v,w}$, where
$\tau$ is the standard symmetrizing trace form.

The group algebra $\mathbb{C}[S_n]$ of $S_n$ is obtained by
specializing $v$ to $1$ in $\cH$, more precisely: by extending
first the scalars in $\cH$ to $\mC$ and then  factoring out
the ideal generated by $v-1$ we get an epimorphism of $\mC$-algebras,
which we call the {\em evaluation map}:
\begin{displaymath}
\mathrm{ev}:\mathbb{C}\otimes_{\mathbb{Z}}\mathcal{H}
\overset{\mathrm{proj}}{\tto}\left(\mathbb{C}\otimes_{\mathbb{Z}}
\mathcal{H}\right)/(v-1)
\overset{\sim}{\to}
\mathbb{C}[S_n],\quad 1\otimes H_w\mapsto w.
\end{displaymath}

The Robinson-Schensted correspondence (see e.g. \cite[3.1]{Sa})
defines a bijection between elements $w\in S_n$ and pairs
$(a(w),b(w))$ of standard tableaux with $n$ boxes, such that $a(w)$ and
$b(w)$ are of the same shape. For every element $w\in S_n$ we denote by
$\mathtt{R}_w=\{x\in S_n\mid a(x)=a(w)\}$ the {\it right cell} of
$S_n$ which contains $w$. Let $\overline{w}$ denote the unique involution
in $\mathtt{R}_w$. Beside $a(\overline{w})=a(w)$ the element $\overline{w}$
satisfies (and is characterized by the property) $a(\overline{w})=b(\overline{w})$. It is the
{\it Duflo involution} of $\mathtt{R}_w$.

Our main result is the construction of a basis $\{f_w\vert w\in S_n\}$
of $\mC[S_n]$ compatible with its regular right $S_n$-module structure
in the following way:

\begin{theorem}\label{tmain}
For $w\in S_n$ set
$f_w=\mathrm{ev}(\hat{\underline{H}}_{\overline{w}}\underline{H}_w)$.
Then the following holds:
\begin{enumerate}[(a)]
\item The elements $\{f_w\vert w\in S_n\}$ form a basis of
$\mathbb{C}[S_n]$.
\item Let $x\in S_n$ and consider the linear
span ${\bf S}(x)$ of all $f_w$, $w\in \mathtt{R}_x$. Then ${\bf
S}(x)$ is invariant with respect to the right action of
$\mathbb{C}[S_n]$ and isomorphic to the (irreducible) cell module
associated with $\mathtt{R}_x$.
\end{enumerate}
\end{theorem}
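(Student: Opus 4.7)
The plan is to prove part (b) first and deduce part (a) by a dimension count.

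For (b), I would work inside the Hecke algebra $\cH$ itself, where the cell module structure is most transparent, and only at the end apply the evaluation map $\mathrm{ev}$. Fix a right cell $\mathtt{R}=\mathtt{R}_x$ with Duflo involution $d=\overline{x}$, and let $\cH_{\leq\mathtt{R}}$ (respectively $\cH_{<\mathtt{R}}$) be the $\mZ[v,v^{-1}]$-span of the Kazhdan--Lusztig basis elements indexed by $w'\leq_R w$ (respectively $w'<_R w$) for some $w\in\mathtt{R}$. By the classical construction, the quotient $\mathbf{C}(\mathtt{R})=\cH_{\leq\mathtt{R}}/\cH_{<\mathtt{R}}$ inherits the structure of a right $\cH$-module, the cell module. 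Consider the right $\cH$-equivariant map
\[
\varphi:\cH_{\leq\mathtt{R}}\lra\cH,\qquad h\longmapsto\hat{\underline{H}}_{d}\cdot h.
\]
The things to check are that $\varphi$ kills $\cH_{<\mathtt{R}}$, so that it descends to $\overline{\varphi}:\mathbf{C}(\mathtt{R})\to\cH$, and that $\overline{\varphi}$ is injective with image the span of $\{\hat{\underline{H}}_d\underline{H}_w:w\in\mathtt{R}\}$; then applying $\mathrm{ev}$ identifies $\mathbf{S}(x)$ with the specialization of $\mathbf{C}(\mathtt{R})$ at $v=1$, which is the classical Specht module.

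Both checks reduce to controlling the products $\hat{\underline{H}}_d\underline{H}_y$. The Duflo involution characterizes the right cell, and the dual basis element $\hat{\underline{H}}_d$ is designed via the identity $\tau(\hat{\underline{H}}_v\underline{H}_{w^{-1}})=\delta_{v,w}$ to detect the piece of the regular module belonging to this cell. Combining this duality with Lusztig's expansion formulas for $\underline{H}_v\underline{H}_w$ involving the $a$-function should force $\hat{\underline{H}}_d\underline{H}_y$ to vanish whenever the right cell of $y$ is strictly smaller than $\mathtt{R}$, and to preserve a non-zero leading coefficient for $y\in\mathtt{R}$, yielding injectivity.

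For part (a), one has $|S_n|=\sum_{\mathtt{R}}|\mathtt{R}|$, summed over right cells, and each cell has cardinality equal to the dimension of the corresponding Specht module. Since there are $\dim S^\la$ right cells of Robinson--Schensted shape $\la$, part (b) combined with the Wedderburn decomposition $\mC[S_n]\cong\bigoplus_\la\End(S^\la)$ accounts for exactly $(\dim S^\la)^2$ basis vectors in the $\la$-isotypic component. Linear independence within a single $\mathbf{S}(x)$ comes from injectivity of $\overline{\varphi}$, while different $\mathbf{S}(x)$'s inside a fixed isotypic component correspond to distinct Duflo involutions and hence occupy distinct columns of the matrix block, so that $\{f_w\}_{w\in S_n}$ is indeed a basis.

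The main obstacle is the cell-theoretic computation of $\hat{\underline{H}}_d\underline{H}_y$ alluded to above: combining $a$-function bookkeeping with the trace duality to obtain both the vanishing on lower cells and the non-degeneracy within $\mathtt{R}$. This is precisely where the paper's categorical viewpoint should help, since the dominant projective-injective module featured in the abstract realizes $\hat{\underline{H}}_d$ as a canonical object, and the Hom-pairing in the relevant subcategory of $\cO$ categorifies the trace form, turning delicate identities into cleaner statements about morphisms between projective modules.
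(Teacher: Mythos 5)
Your plan has the right shape (descend left multiplication by $\hat{\underline{H}}_{\overline{x}}$ to the cell module, then count), but the two ``checks'' you defer are not technical details: they are the entire content of the theorem, and nothing in the trace duality $\tau(\hat{\underline{H}}_v\underline{H}_{w^{-1}})=\delta_{v,w}$ forces them formally. Concretely, the vanishing $\hat{\underline{H}}_{d}\cdot\cH_{<\mathtt{R}}=0$ and the non-degeneracy of $\{\hat{\underline{H}}_d\underline{H}_w : w\in\mathtt{R}\}$ require either Lusztig's asymptotic algebra and $a$-function machinery (which is exactly the route of \cite{Lu} and \cite{Ne}, a substantial independent input) or the categorical argument of the paper, whose whole Section~\ref{s2} exists to prove precisely these facts: the Key statement $\theta_wL(\overline{w})\cong P^{\hat{\mathtt{R}}_w}(w)$, established via Lemmas~\ref{lemma4}--\ref{lemma6} on the dominant projective $P^{\hat{\mathtt{R}}_w}(e)$ together with projective-injectivity results from \cite{MS} and \cite{IS}, and the identification of the classes $[P^{\hat{\mathtt{R}}_x}(w)]$, $w\in\mathtt{R}_x$, with the Kazhdan--Lusztig basis of the cell module (\cite{KMS}, \cite{MS}). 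Moreover, your strategy of working generically in $\cH$ ``and only at the end'' applying $\mathrm{ev}$ is structurally flawed: injectivity of a map of $\mZ[v,v^{-1}]$-modules does not automatically survive specialization at $v=1$, and the paper explicitly notes that this construction does \emph{not} give a basis of $\cH$; the argument has to be carried out at $v=1$ (where, for instance, irreducibility of the Specht module plus $\tau(\hat{\underline{H}}_{\overline{x}}\underline{H}_{\overline{x}})=1$ would give injectivity within a single cell once the descent is established -- but the descent itself is what is missing).

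The deduction of (a) from (b) also has a gap. Knowing that each ${\bf S}(x)$ is irreducible of type $\lambda$ and that there are $\dim S^{\lambda}$ right cells of shape $\lambda$ does not imply that the sum of these subspaces is direct: distinct irreducible submodules inside one isotypic component can intersect or coincide. Your phrase ``distinct Duflo involutions hence distinct columns of the matrix block'' is exactly the matrix-unit (Wedderburn) property, i.e.\ Theorem~\ref{tmain2} combined with \cite{Ne}, which is not available at this point of the argument. The paper avoids this by a different and cheaper device: each $f_w=g_w=\varphi([P^{\hat{\mathtt{R}}_w}(w)])$ is expanded in the basis $\{\mathrm{ev}(\hat{\underline{H}}_z)\}$ coming from classes of simples, its support lies in $\hat{\mathtt{R}}_w$ by construction of the subcategory $\mathcal{O}_0^{\hat{\mathtt{R}}_w}$, and linear independence follows by induction along the right order, the in-cell case being supplied by \cite[Theorem~1]{KMS} and \cite[Theorem~18]{MS}. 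Some analogue of this triangularity (or of the asymptotic-algebra results) would have to be added to your argument for (a) to go through.
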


In other words, there is a decomposition of the right regular representation of
$S_n$ into a direct sum of irreducible modules which is compatible with the
basis $\{f_w\vert w\in S_n\}$. In fact the theorem and its proof are valid
over any field of characteristic zero. As an example, for $n=3$ let
$s$ and $t$ be the simple reflections, then our basis
consists of the elements
\begin{displaymath}
\begin{array}{rclrcl}
f_e&=&(e-s-t+st+ts-sts)e=e-s-t+st+ts-sts,\\
f_s&=&(s-ts-st+sts)(s+e)=e+s-t-ts,\\
f_t&=&(t-ts-st+sts)(t+e)=e+t-s-st,\\
f_{st}&=&(s-ts-st+sts)(st+s+t+e)=s+st-ts-sts,\\
f_{ts}&=&(t-ts-st+sts)(ts+s+t+e)=t+ts-st-sts,\\ 
f_{sts}&=&sts(e+t+s+st+ts+sts)=e+t+s+st+ts+sts.
\end{array}
\end{displaymath}
Unfortunately, this method does not give a basis for the algebra
$\mathcal{H}$.

Theorem~\ref{tmain} turns out to be related to the paper \cite{Ne},
where a similar basis was studied. Let $\{\mathtt{R}_i\,:\, i\in I\}$ be a
set of right cells in $S_n$ containing exactly one representative of
each two-sided sell. For each $i\in I$ and $(x,y)\in \mathtt{R}_i\times
\mathtt{R}_i$ set
$h^i_{(x,y)}=\mathrm{ev}(\hat{\underline{H}}_{x^{-1}}\underline{H}_y)$. From
\cite{Ne} it follows that $\{h^i_{(x,y)}:i\in I,\, (x,y)\in \mathtt{R}_i\times
\mathtt{R}_i\}$ has properties analogous to those of the basis
$\{f_w\vert w\in S_n\}$ from Theorem~\ref{tmain}. Moreover,
in \cite{Ne} it is even proved that a
normalized version of $\{h^i_{(x,y)}\vert i\in I,(x,y)\in \mathtt{R}_i\times
\mathtt{R}_i\}$ is in fact Wedderburn's basis of $\mathbb{C}[S_n]$
(i.e. basis elements correspond to matrix units in Wedderburn's
decomposition of $\mathbb{C}[S_n]$). The origins of the
basis $\{h^i_{(x,y)}\vert i\in I,(x,y)\in \mathtt{R}_i\times \mathtt{R}_i\}$ go
further back to \cite{Lu}. There is an asymptotic version $\mathcal{J}$ of the
Hecke algebra, introduced by Lusztig in \cite{Lu} together with a homomorphism
$\Psi: \mathcal{H}\rightarrow \mathbb{Z}[v,v^{-1}]\otimes_{\mathbb{Z}}
\mathcal{J}$ which becomes an isomorphism over $\mathbb{Q}(t)$. As pointed out
to us by Neunh{\"o}ffer, the basis $\{h^i_{(x,y)}\vert i\in I,(x,y)\in
\mathtt{R}_i\times \mathtt{R}_i\}$  is exactly Lusztig's basis for
$\mathcal{J}$ pulled back via the homomorphism $\Psi$ to $\mathcal{H}$. The
connection to \cite{Ne} is the following:

\begin{theorem}\label{tmain2}
$\{f_w\vert w\in S_n\}$=$\{h^i_{(x,y)}\vert i\in I,(x,y)\in
\mathtt{R}_i\times \mathtt{R}_i\}$.
\end{theorem}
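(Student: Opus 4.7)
The plan is to match the families $\{f_w\}$ and $\{h^i_{(x,y)}\}$ element-by-element via the Robinson-Schensted correspondence, using as a black box the key fact from \cite{Ne} that (after suitable normalisation) the $h^i_{(x,y)}$ are the matrix units of Wedderburn's decomposition of $\mathbb{C}[S_n]$.

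The starting observation is purely formal: since the Duflo involution satisfies $\overline{w}^{-1}=\overline{w}$, we may write
\[
f_w \;=\; \mathrm{ev}\bigl(\hat{\underline{H}}_{\overline{w}^{-1}}\underline{H}_w\bigr),
\]
so $f_w$ already has the shape of an $h$-element with $(x,y)=(\overline{w},w)$, both lying in the right cell $\mathtt{R}_w$. The only discrepancy with the definition of $h^i_{(x,y)}$ is that the latter restricts $(x,y)$ to the fixed representative $\mathtt{R}_i$ of each two-sided cell, whereas $\mathtt{R}_w$ is usually some other right cell in the same two-sided cell.

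The content of \cite{Ne} we invoke is that $\mathrm{ev}(\hat{\underline{H}}_{x^{-1}}\underline{H}_y)$ is, up to a scalar, the matrix unit in the Wedderburn block $M_{d_\lambda}(\mathbb{C})\subset\mathbb{C}[S_n]$ indexed by the pair of standard tableaux $(b(x),b(y))$, where $\lambda$ is the common RSK-shape. Crucially this element depends only on the right RSK-coordinates $b(x),b(y)$ and not on the left coordinate $a(x)=a(y)$ that pins down the right cell; varying the representative $\mathtt{R}_i$ within a two-sided cell therefore only re-labels the same family of matrix units.

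Applying this to $f_w$: because $\overline{w}$ is an involution, $b(\overline{w})=a(\overline{w})=a(w)$, so $f_w$ is identified with the matrix unit labelled by $(a(w),b(w))$. Since Robinson-Schensted is a bijection between $S_n$ and pairs of standard tableaux of the same shape, the map $w\mapsto(a(w),b(w))$ hits every such pair exactly once, so $\{f_w\mid w\in S_n\}$ exhausts all matrix units in all Wedderburn blocks and hence equals $\{h^i_{(x,y)}\}$. The main obstacle is the matrix-unit identification used in the third paragraph; this is, however, precisely the result of \cite{Ne}, and once it is quoted the remainder is just combinatorial bookkeeping via Robinson-Schensted and the defining property $a(\overline{w})=b(\overline{w})$ of the Duflo involution.
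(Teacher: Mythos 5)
There is a genuine gap, and it sits exactly where you place your ``black box''. What \cite{Ne} provides (and what the paper attributes to it) is a statement about one \emph{fixed} right cell $\mathtt{R}_i$ per two-sided cell: after a suitable normalization, the elements $h^i_{(x,y)}=\mathrm{ev}(\hat{\underline{H}}_{x^{-1}}\underline{H}_y)$ with $(x,y)\in\mathtt{R}_i\times\mathtt{R}_i$ are the matrix units of the corresponding Wedderburn block. The fact you invoke is strictly stronger: that for $x,y$ lying in an \emph{arbitrary} right cell the element $\mathrm{ev}(\hat{\underline{H}}_{x^{-1}}\underline{H}_y)$ depends only on the tableaux $(b(x),b(y))$ and not on the right cell containing $x$ and $y$. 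Since $f_w=\mathrm{ev}(\hat{\underline{H}}_{\overline{w}}\underline{H}_w)$ is formed inside the cell $\mathtt{R}_w$, which is usually \emph{not} the chosen representative $\mathtt{R}_i$ of its two-sided cell, Neunh{\"o}ffer's theorem as cited simply does not speak about $f_w$; the cross-cell independence you assert is (modulo the Robinson--Schensted bookkeeping in your last paragraph) equivalent to Theorem~\ref{tmain2} itself, so quoting it as ``precisely the result of \cite{Ne}'' makes the argument circular. Note also that the family $\{h^i_{(x,y)}\}$ a priori depends on the choice of the representatives $\mathtt{R}_i$; that this choice is immaterial is a \emph{consequence} of the theorem, not an available input. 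The paper's proof supplies exactly the missing comparison between different right cells in one two-sided cell: it rewrites $h^i_{(x,y)}=\varphi([\theta_y L(x^{-1})])$ and uses the equivalence $\mathrm{F}$ from \cite[Section~5]{MS} between the categories of projective-injective modules in $\mathcal{O}_0^{\hat{\mathtt{R}}_{\overline{y}}}$ and $\mathcal{O}_0^{\hat{\mathtt{R}}_{x^{-1}}}$, which commutes with projective functors, to transport the Key statement $\theta_y L(\overline{y})\cong P^{\hat{\mathtt{R}}_y}(y)$ from the Duflo involution to an arbitrary element of the same left cell. Some substitute for this transport argument (or an honest proof of the invariance statement) is what your proposal is missing.

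A secondary point: even if one granted the matrix-unit identification ``up to a scalar'' for arbitrary right cells, the theorem claims equality of the two \emph{sets of elements}, not equality up to scalars. You would still have to check that the normalization attached to $f_w$ agrees with the one attached to the matching $h^i_{(x,y)}$; your write-up never addresses these scalars, whereas the paper's argument identifies the actual elements via $f_w=g_w=\varphi([\theta_wL(\overline{w})])$ and needs no normalization at all.
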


The origins of Theorem~\ref{tmain}, as well as the proof of
Theorem~\ref{tmain2}, are categorical; and this is absolutely crucial for our
arguments. In particular, our setup is completely different from the
combinatorial approach of \cite{Ne}. There are alternative combinatorial
approaches to the construction of a basis for $\mathbb{C}[S_n]$
and some related algebras in which the regular representation decomposes
into a direct sum of irreducibles, see \cite{RW}, \cite{Mu2},
\cite{Mu}, \cite{Mat1}, \cite{Mat2}. There are also alternative combinatorial
constructions (e.g. \cite{KL}, \cite{Al1}, \cite{Al2}) giving decompositions
of the regular representation of  $S_n$ into irreducible representations
using an explicit basis, which lead only to filtrations whose successive
subquotients are irreducible.

\noindent {\bf Acknowledgment.} We thank Ken Brown for suggestions, Meinfold Geck for information about \cite{Ne}, and
Michael Rapoport for helpful discussions. We also thank
Max Neunh{\"o}ffer, Susumu Ariki and Andrew Mathas for remarks on a preliminary version of the paper. Finally, we thank
the referee for very useful comments and suggestions.

\section{Proof of Theorem~\ref{tmain}}\label{s2}
We prove Theorem~\ref{tmain} by giving an explicit categorical
interpretation of all ingredients, which is based on
the categorification of cell modules as established in
\cite[Section~4]{MS} (the original idea of categorifying the Hecke
algebra goes back to \cite{KL} and \cite{BG}). The main players here
are certain subquotient categories of the famous BGG category
$\mathcal{O}$ (for the latter see \cite{BGG}).

Let $\mathcal{O}_0$ be the principal block of $\cO$ for the simple
complex Lie algebra $\mathfrak{sl}_n$ with its standard triangular
decomposition. The simple objects in $\mathcal{O}_0$ are the $L(w)$,
$w\in S_n$, the simple highest weight modules with the highest
weight $w(\rho)-\rho$, where $\rho$ is the half-sum of all positive
roots. Let $\Delta(w)$ and $P(w)$ denote the Verma and the
indecomposable projective module with unique simple quotient
isomorphic to $L(w)$ respectively. Further, denote by $\theta_w$ the
indecomposable projective endofunctor of $\mathcal{O}_0$ with the
property $\theta_w P(e)\cong \theta_w \Delta(e)\cong P(w)$ (see
\cite{BG}). Finally, let $[\mathcal{O}_0]$ denote the complexified
Grothendieck group of $\mathcal{O}_0$. For $M\in \mathcal{O}_0$ we
denote by $[M]$ its image in $[\mathcal{O}_0]$.

There is a $\mC$-linear isomorphism $\varphi:[\mathcal{O}_0]\to
\mathbb{C}[S_n]$ with $\varphi([\Delta(w)])=w$. The Kazhdan-Lusztig conjecture
(\cite{KL}, proved in \cite{BeBe}, \cite{BK}) implies that
$\varphi([P(w)])=\mathrm{ev}(\underline{H}_w)$  (for an overview see e.g.
\cite[Subsection~3.4]{MS}). The standard bilinear form on $\mathbb{C}[S_n]$ is
categorified via the bifunctor $\mathrm{Ext}^*({}_-,{}_-)$
(\cite[Section~5]{KMS} or \cite[Subsection~4.6]{MS}). Indecomposable projective
and simple modules form dual bases with respect to this form, and hence
\begin{eqnarray}\label{eq01}
\label{phi}
\varphi([L(w)])&=&\mathrm{ev}(\hat{\underline{H}}_w)
\end{eqnarray}
The functors $\theta_w$ are exact and induce therefore $\mathbb{C}$-linear
endomorphisms $[\theta_w]$ of $[\mathcal{O}_0]$. By \cite[Theorem~3.4(iv)]{BG}
and \cite{So} (for a more adjusted reformulation see \cite[Subsection~3.4]{MS})
we have
\begin{equation}\label{eq1}
\varphi([\theta_w M])=\varphi([\theta_w][M])=
\varphi([M])\mathrm{ev}(\underline{H}_w).
\end{equation}
for all $M$ in $\cO_0$. Recall the right cells mentioned above and let $\leq_{\mathtt{R}}$
be the right preorder on $S_n$. Fix $w\in W$ and set
$\hat{\mathtt{R}}_w=\{x\in S_n\vert x\leq_{\mathtt{R}}y \text{ for some }
y\in \mathtt{R}_w\}$.
Associated with the right cell $\mathtt{R}_w$ of $w$ we have the full
subcategory $\mathcal{O}_0^{\hat{\mathtt{R}}_w}$ of $\mathcal{O}_0$, which
consists of all modules $M$ with all composition subquotients of the form
$L(x)$ with $x\in \hat{\mathtt{R}}_w$. Let
$\mathrm{Z}^{\hat{\mathtt{R}}_w}:  \mathcal{O}_0\to
\mathcal{O}_0^{\hat{\mathtt{R}}_w}$
be the natural projection
functor which takes the maximal quotient that lies in
$\mathcal{O}_0^{\hat{\mathtt{R}}_w}$. All this is built up such that
we have
\begin{equation}\label{eq2}
\mathrm{Z}^{\hat{\mathtt{R}}_w}\theta_x\cong \theta_x
\mathrm{Z}^{\hat{\mathtt{R}}_w}
\end{equation}
for any $x,w\in S_n$, (\cite[Lemma~19]{MS}). For $x\in S_n$ we
define $P^{\hat{\mathtt{R}}_w}(x)= \mathrm{Z}^{\hat{\mathtt{R}}_w}
P(x)$, and it follows that
\begin{equation}\label{eq3}
 P^{\hat{\mathtt{R}}_w}(x)\neq
0\text{ if and only if } x\in \hat{\mathtt{R}}_w.
\end{equation}
Moreover, the set $\{P^{\hat{\mathtt{R}}_w}(x)\vert x\in
\hat{\mathtt{R}}_w\}$ constitutes a complete list of indecomposable
projective modules in $\mathcal{O}_0^{\hat{\mathtt{R}}_w}$.

The following provides a basis of $\mathbb{C}[S_n]$ with most of the desired properties:

\begin{proposition}\label{lemma1}
For $w\in S_n$  define
$g_w=\varphi([P^{\hat{\mathtt{R}}_w}(w)])\in\mathbb{C}[S_n]$. Then the
following holds:
\begin{enumerate}[(a)]
\item $\{g_w\vert w\in S_n\}$ is a basis of $\mathbb{C}[S_n]$.
\item For every $x\in S_n$ the linear span of
$\{g_w\vert w\in \mathtt{R}_x\}$  is invariant
with respect to the right action of $S_n$ and is isomorphic to the
cell module associated with $\mathtt{R}_x$.
\end{enumerate}
\end{proposition}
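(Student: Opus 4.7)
Plan: I would first establish a categorical cell-invariance lemma, then use it to prove (b) directly and (a) by induction over right cells.

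Key Lemma: For $w$ in a right cell $\mathtt{R}$ and any simple reflection $s\in\cS$, I claim
\[\theta_s P^{\hat{\mathtt{R}}}(w)\;\cong\;\bigoplus_{y\in\mathtt{R}} P^{\hat{\mathtt{R}}}(y)^{\oplus n_y}.\]
The proof combines two inputs. First, in $\mathcal{O}_0$ itself $\theta_s P(w)\cong\bigoplus_y P(y)^{\oplus n_y}$, where the appearing $y$'s are determined by the expansion of $\underline{H}_w\underline{H}_s$ in the Kazhdan-Lusztig basis. By the cell-ideal structure of Kazhdan-Lusztig theory, $\underline{H}_w\underline{H}_s$ lies in the right $\mathcal{H}$-ideal generated by $\underline{H}_w$, so every appearing $y$ is either in $\mathtt{R}$ itself or in a cell outside $\hat{\mathtt{R}}$. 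Second, by \eqref{eq2} the projection $\mathrm{Z}^{\hat{\mathtt{R}}}$ commutes with $\theta_s$, and since $L(y)$ is the simple top of $P(y)$ we have $\mathrm{Z}^{\hat{\mathtt{R}}} P(y)=0$ whenever $y\notin\hat{\mathtt{R}}$. Hence only the summands with $y\in\mathtt{R}$ survive.

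For part (b): since $\hat{\mathtt{R}}_y=\hat{\mathtt{R}}$ for $y\in\mathtt{R}$, we have $P^{\hat{\mathtt{R}}}(y)=P^{\hat{\mathtt{R}}_y}(y)$ and therefore $\varphi([P^{\hat{\mathtt{R}}}(y)])=g_y$. Applying $\varphi$ to the Key Lemma and using \eqref{eq1} yields
\[g_w\cdot\mathrm{ev}(\underline{H}_s)=\sum_{y\in\mathtt{R}} n_y\,g_y,\]
so $\mathbf{S}(x):=\mathrm{span}\{g_w:w\in\mathtt{R}_x\}$ is closed under right multiplication by every $\mathrm{ev}(\underline{H}_s)$. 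Since the simple reflections generate $S_n$ and the change of basis $\{\mathrm{ev}(\underline{H}_w)\}\leftrightarrow\{w\}$ is unitriangular, $\mathbf{S}(x)$ is a right $\mathbb{C}[S_n]$-submodule. To identify it with the cell module of $\mathtt{R}_x$, I would invoke the categorification of the cell module by the Serre quotient $\mathcal{O}_0^{\hat{\mathtt{R}}_x}/\mathcal{O}_0^{\hat{\mathtt{R}}_x\setminus\mathtt{R}_x}$ from \cite[Section~4]{MS}: the structure constants $n_y$ above are precisely those governing the right $\mathcal{H}$-action on the basis of indecomposable projectives of this quotient.

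For part (a): I would fix a linear extension $\mathtt{R}_1,\ldots,\mathtt{R}_N$ of the right-cell preorder, set $\hat{S}_k=\mathtt{R}_1\cup\cdots\cup\mathtt{R}_k$ (downward-closed by construction), and $V_k=\sum_{i\leq k}\mathbf{S}(\mathtt{R}_i)$. By induction on $k$ one has $V_k=\varphi([\mathcal{O}_0^{\hat{S}_k}])$: the inclusion $V_k\subseteq\varphi([\mathcal{O}_0^{\hat{S}_k}])$ is immediate because $g_w\in\varphi([\mathcal{O}_0^{\hat{\mathtt{R}}_w}])$ and $\hat{\mathtt{R}}_w\subseteq\hat{S}_k$ for $w\in\hat{S}_k$; for the dimension, $P^{\hat{\mathtt{R}}_k}(w)$ and $P^{\hat{S}_k}(w)$ differ only by composition factors in $\hat{S}_{k-1}$, so their classes coincide in $[\mathcal{O}_0^{\hat{S}_k}]/[\mathcal{O}_0^{\hat{S}_{k-1}}]$ and form the basis of indecomposable projectives in the cell-module quotient $\mathcal{O}_0^{\hat{S}_k}/\mathcal{O}_0^{\hat{S}_{k-1}}$. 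Hence the $g_w$ with $w\in\mathtt{R}_k$ are linearly independent modulo $V_{k-1}$, and at $k=N$ we obtain $V_N=\varphi([\mathcal{O}_0])=\mathbb{C}[S_n]$, so the count $\sum_i|\mathtt{R}_i|=n!$ forces $\{g_w\}$ to be a basis. The main obstacle throughout is the Key Lemma, specifically pinpointing which $y$ may contribute to $\theta_s P(w)$; the rest is formal bookkeeping inside the categorification.
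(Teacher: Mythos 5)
Your proposal is correct and follows essentially the paper's own argument: your Key Lemma is precisely the displayed chain of isomorphisms in the paper's proof (commuting $\mathrm{Z}^{\hat{\mathtt{R}}}$ past $\theta_s$ via \eqref{eq2}, the composition formula \cite[(4.1)]{MS}, and the vanishing \eqref{eq3}), and both (a) and (b) are then reduced, exactly as in the paper, to the cell-module categorification results of \cite[Section~4]{MS} and \cite{KMS}. The only caveat is your intermediate assertion that $P^{\hat{\mathtt{R}}_k}(w)$ and $P^{\hat{S}_k}(w)$ differ only by composition factors in $\hat{S}_{k-1}$, which is not justified as stated (taking maximal quotients can lose factors lying in the subcategory), but it is also unnecessary: \cite[Theorem~18]{MS} together with \cite[Theorem~1]{KMS} already identifies the classes of the $P^{\hat{\mathtt{R}}_k}(w)$, $w\in\mathtt{R}_k$, with the Kazhdan--Lusztig basis of the cell module, which gives the linear independence modulo lower cells directly, as the paper does.
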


\begin{proof}
As $|\{g_w\vert w\in S_n\}|=|S_n|=\dim_{\mathbb{C}} \mathbb{C}[S_n]$,
it is enough to show that the elements from $\{g_w\vert w\in S_n\}$ are
linearly independent. By definition of the category
$\cO_0^{\hat{\mathtt{R}}_x}$,
all the simple composition factors of $P^{\hat{\mathtt{R}}_x}(w)$ are of the
form $L(z)$ where $z$ is smaller or equal to $x$ in the right cell
order. Therefore, when expressed in the specialization
$\{\operatorname{ev}(\hat{\underline{H}}_z)\mid z\in S_n\}$
of the dual Kazhdan-Lusztig basis, the element $g_w$ is a
linear combination of basis elements, corresponding to $z\in
\hat{\mathtt{R}}_x$ (see \eqref{eq01}).
By induction on the right order, it is then enough to show
that for any $x\in S_n$ the elements from $\{g_w\vert w\in \mathtt{R}_x\}$ are
linearly independent. By \cite[Theorem~1]{KMS} and \cite[Theorem~18]{MS}, these
elements form the Kazhdan-Lusztig basis in the cell module associated with
$\mathtt{R}_x$. The cell module is a subquotient of $\mathbb{C}[S_n]$. Hence
these elements are linearly independent already in $\mathbb{C}[S_n]$. The first
statement follows.

To prove the invariance it is enough to show, thanks to \eqref{eq1},
that projective functors preserve the additive subcategory
$\mathscr{A}$ of $\mathcal{O}_0^{\hat{\mathtt{R}}_x}$ generated by
the indecomposable projective modules $P^{\hat{\mathtt{R}}_x}(w)$,
$w\in \mathtt{R}_x$. Since $\mathcal{H}$ is generated by the
$\underline{H}_s$, where $s$ runs through $\cS$, it is enough to
show that for any $s\in\cS$ and $w\in \mathtt{R}_x$ the module
$\theta_s P^{\hat{\mathtt{R}}_x}(w)$ belongs to $\mathscr{A}$. Now
\eqref{eq2}, \cite[(4.1)]{MS}  and \eqref{eq3} provide the following
three isomorphisms:
\begin{eqnarray*}
\theta_s P^{\hat{\mathtt{R}}_x}(w)
=\theta_s\mathrm{Z}^{\hat{\mathtt{R}}_x} \theta_w\Delta(e)&\cong&\mathrm{Z}^{\hat{\mathtt{R}}_x}\theta_s\theta_w  \Delta(e)\\
&\cong& \mathrm{Z}^{\hat{\mathtt{R}}_x}\left(\oplus_{y\geq_{\mathtt{R}}
w}\theta_y^{m_y}  \Delta(e)\right)=\oplus_{y\geq_{\mathtt{R}} \mathtt{R}_x}
(\mathrm{Z}^{\hat{\mathtt{R}}_x}P(y))^{\oplus {m_y}}\\ &\cong&
\oplus_{y\in \mathtt{R}_x} \oplus_{i=1}^{m_y}
P^{\hat{\mathtt{R}}_w}(y)
\end{eqnarray*}
for some non-negative integers $m_y$.
The claim about the invariance follows. The claim about the cell
module follows from \cite[Theorem~16 and Theorem~18]{MS}.
\end{proof}

Now Theorem~\ref{tmain} follows from the
following statement:

\begin{proposition}\label{prop3}
We have $f_w=g_w$ for all $w\in S_n$. In particular,
Theorem~\ref{tmain} follows from Proposition~\ref{lemma1}.
\end{proposition}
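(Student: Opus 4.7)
The plan is to translate $f_w$ into the Grothendieck group $[\mathcal{O}_0]$ and reduce to an identification of modules inside $\mathcal{O}_0^{\hat{\mathtt{R}}_w}$. Identity \eqref{phi} gives $\mathrm{ev}(\hat{\underline{H}}_{\overline{w}})=\varphi([L(\overline{w})])$, and then \eqref{eq1} with $M=L(\overline{w})$ yields
\[
f_w \;=\; \varphi([L(\overline{w})])\cdot\mathrm{ev}(\underline{H}_w) \;=\; \varphi([\theta_w L(\overline{w})]).
\]
Since $g_w=\varphi([P^{\hat{\mathtt{R}}_w}(w)])$ and $\varphi$ is a $\mathbb{C}$-linear isomorphism, the equality $f_w=g_w$ is equivalent to the Grothendieck-group identity
\[
[\theta_w L(\overline{w})] \;=\; [P^{\hat{\mathtt{R}}_w}(w)] \qquad \text{in } [\mathcal{O}_0].
\]

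Both sides lie in the subcategory $\mathcal{O}_0^{\hat{\mathtt{R}}_w}$: for $P^{\hat{\mathtt{R}}_w}(w)$ this is by definition, and for $\theta_w L(\overline{w})$ it follows because $\overline{w}\in\mathtt{R}_w\subseteq\hat{\mathtt{R}}_w$, whence $\mathrm{Z}^{\hat{\mathtt{R}}_w}L(\overline{w})=L(\overline{w})$ and \eqref{eq2} gives
\[
\theta_w L(\overline{w}) \;\cong\; \mathrm{Z}^{\hat{\mathtt{R}}_w}\theta_w L(\overline{w}) \;\in\; \mathcal{O}_0^{\hat{\mathtt{R}}_w}.
\]
The comparison can therefore be carried out inside $\mathcal{O}_0^{\hat{\mathtt{R}}_w}$, and I would try to upgrade the Grothendieck-group statement to a genuine isomorphism $\theta_w L(\overline{w})\cong P^{\hat{\mathtt{R}}_w}(w)$. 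Since $P^{\hat{\mathtt{R}}_w}(w)$ is the unique indecomposable projective in that subcategory whose head is $L(w)$, it suffices to verify (i) the head of $\theta_w L(\overline{w})$ is $L(w)$ with multiplicity one, and (ii) $\theta_w L(\overline{w})$ is projective in $\mathcal{O}_0^{\hat{\mathtt{R}}_w}$.

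For (i) I would use the biadjunction between $\theta_w$ and $\theta_{w^{-1}}$ to rewrite $\mathrm{Hom}(\theta_w L(\overline{w}),L(y))\cong\mathrm{Hom}(L(\overline{w}),\theta_{w^{-1}}L(y))$ and then exploit the defining Duflo property---that $\overline{w}$ is the unique involution in $\mathtt{R}_w$---to pin down $y=w$ with multiplicity one. For (ii) the natural route is to lift along the canonical surjection $P^{\hat{\mathtt{R}}_w}(\overline{w})\twoheadrightarrow L(\overline{w})$ from the ``dominant projective'' of $\mathcal{O}_0^{\hat{\mathtt{R}}_w}$, apply $\theta_w$, and use the projective-functor calculation from the proof of Proposition~\ref{lemma1} (together with \eqref{eq2} and \eqref{eq3}) to extract $P^{\hat{\mathtt{R}}_w}(w)$ as the relevant summand. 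The main obstacle is step (ii): producing a projective module by applying $\theta_w$ to a simple module is subtle and depends essentially on the special role of $\overline{w}$; a robust fallback, should (ii) resist direct attack, is a purely combinatorial comparison in the Grothendieck group, expanding $\hat{\underline{H}}_{\overline{w}}\underline{H}_w$ in the dual Kazhdan-Lusztig basis via the trace form $\tau$ and matching the resulting multiplicities against $[P^{\hat{\mathtt{R}}_w}(w):L(y)]$ from the Kazhdan-Lusztig conjecture.
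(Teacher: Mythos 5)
Your reduction is exactly the paper's: using \eqref{phi} and \eqref{eq1} you rewrite $f_w=\varphi([\theta_w L(\overline{w})])$, so the entire content of the proposition is the isomorphism $\theta_w L(\overline{w})\cong P^{\hat{\mathtt{R}}_w}(w)$ (the paper's ``Key statement''). Your proposed proof of that isomorphism, however, has genuine gaps at both of its steps. In (i), the claim that adjunction plus ``$\overline{w}$ is the unique involution in $\mathtt{R}_w$'' pins down the head as $L(w)$ with multiplicity one is not an argument: what you need is $\dim\mathrm{Hom}(L(\overline{w}),\theta_{w^{-1}}L(y))=\delta_{y,w}$ for the relevant $y$, and this is precisely the nontrivial property of Duflo involutions (Lusztig's asymptotic-algebra identities $\gamma_{x^{-1},y,d}=\delta_{x,y}$ from \cite{Lu}); it does not follow from the mere uniqueness of the involution in the cell. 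In (ii), applying $\theta_w$ to the surjection $P^{\hat{\mathtt{R}}_w}(\overline{w})\twoheadrightarrow L(\overline{w})$ only exhibits $\theta_w L(\overline{w})$ as a quotient of the projective module $\theta_w P^{\hat{\mathtt{R}}_w}(\overline{w})$, and quotients of projectives need not be projective, so no projectivity is obtained; you acknowledge this obstacle yourself. The fallback is not a proof either: expanding $\hat{\underline{H}}_{\overline{w}}\underline{H}_w$ in the dual Kazhdan--Lusztig basis requires exactly the structure constants of the asymptotic algebra (the content of \cite{Ne}, \cite{Lu} that the paper is re-deriving categorically), and the multiplicities $[P^{\hat{\mathtt{R}}_w}(w):L(y)]$ are not read off directly from the Kazhdan--Lusztig conjecture, since $P^{\hat{\mathtt{R}}_w}(w)=\mathrm{Z}^{\hat{\mathtt{R}}_w}P(w)$ is a quotient of $P(w)$ whose composition multiplicities need not be the naive restriction of those of $P(w)$ to $\hat{\mathtt{R}}_w$.

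The paper closes the gap along a different route: instead of proving projectivity of $\theta_w L(\overline{w})$ directly, it analyses the dominant projective $P^{\hat{\mathtt{R}}_w}(e)$ and shows $\theta_w P^{\hat{\mathtt{R}}_w}(e)\cong\theta_w L(\overline{w})$, the left-hand side being $P^{\hat{\mathtt{R}}_w}(w)$ by \eqref{eq2}. Concretely, Lemma~\ref{lemma4} (adjunction plus Robinson--Schensted: $x\in\mathtt{R}_w$, $x\neq\overline{w}$ forces $x^{-1}\notin\hat{\mathtt{R}}_w$) shows no $L(x)$ with $x\in\mathtt{R}_w\setminus\{\overline{w}\}$ occurs in $P^{\hat{\mathtt{R}}_w}(e)$; Lemma~\ref{lemma5} shows $\theta_x$ annihilates every $L(y)$ with $y\in\hat{\mathtt{R}}_w\setminus\mathtt{R}_w$; and Lemma~\ref{lemma6} --- the real crux, resting on the projective-injectivity of $P^{\hat{\mathtt{R}}_w}(\overline{w})$ in $\mathcal{O}_0^{\hat{\mathtt{R}}_w}$ via \cite{IS}, \cite{KMS} and \cite[Theorem~18]{MS} --- gives $[P^{\hat{\mathtt{R}}_w}(e):L(\overline{w})]=1$. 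Exactness of $\theta_w$ then yields the Key statement. Some input of this depth (projective-injective modules in the subquotient categories, or equivalently the asymptotic-algebra properties of Duflo involutions) is unavoidable, and it is exactly what your sketch leaves unproved.
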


\begin{proof}
We already know that $\varphi([L(w)])=\operatorname{ev}(\hat{\underline{H}}_w)$
for all $w\in S_n$. Thanks to \eqref{eq1} and the definitions of $f_w$ and
$g_w$, the proposition is implied by the

\begin{center}
\frame{{ \it Key statement:}\,\,\,  Let $w\in S_n$, then $\theta_w
L(\overline{w})\cong P^{\hat{\mathtt{R}}_w}(w)$,}
\end{center}

which also explains the categorical meaning of the basis.
In what follows we prove this statement.\\

Recall that $P^{\hat{\mathtt{R}}_w}(w)\cong \theta_w
P^{\hat{\mathtt{R}}_w}(e)$ by \eqref{eq2}. To prove the key
statement we have to study the dominant projective module
$P^{\hat{\mathtt{R}}_w}(e)$ in $\mathcal{O}_0^{\hat{\mathtt{R}}_w}$
in more detail.

\begin{lemma}\label{lemma4}
Let $x\in \mathtt{R}_w$ be such that $x\neq \overline{w}$. Then
$[P^{\hat{\mathtt{R}}_w}(e):L(x)]=0$.
\end{lemma}

\begin{proof}
Recall that the functor $\theta_x$ is both left and right adjoint to the
functor $\theta_{x^{-1}}$. Hence we have
\begin{displaymath}
\begin{array}{rcl}
[P^{\hat{\mathtt{R}}_w}(e):L(x)]&=&
\dim \mathrm{Hom}_{\mathcal{O}}(P^{\hat{\mathtt{R}}_w}(x),
P^{\hat{\mathtt{R}}_w}(e))\\
&=&
\dim \mathrm{Hom}_{\mathcal{O}}(\theta_x P^{\hat{\mathtt{R}}_w}(e),
P^{\hat{\mathtt{R}}_w}(e))\\
&=&
\dim \mathrm{Hom}_{\mathcal{O}}(P^{\hat{\mathtt{R}}_w}(e),
\theta_{x^{-1}} P^{\hat{\mathtt{R}}_w}(e)).\\
\end{array}
\end{displaymath}

As $x\neq \overline{w}$, we have $x\neq x^{-1}$, and hence, using
\cite[Theorem~3.6.6]{Sa}, we have $a(x^{-1})=b(x)\neq  a(x)$.
Thus $x^{-1}\not\in \mathtt{R}_w$. Since $a(x^{-1})$ and $a(x)$
still have the same shape, it follows that $x^{-1}\not\in
\hat{\mathtt{R}}_w$ (\cite[Exercise~10, page 198]{BjBr}). Therefore
$\theta_{x^{-1}} P^{\hat{\mathtt{R}}_w}(e)= \theta_{x^{-1}}
\mathrm{Z}^{\hat{\mathtt{R}}_w} \Delta(e)\cong
\mathrm{Z}^{\hat{\mathtt{R}}_w} \theta_{x^{-1}} \Delta(e)\cong
\mathrm{Z}^{\hat{\mathtt{R}}_w} P(x^{-1})=0$
and thus $\dim
\mathrm{Hom}_{\mathcal{O}}(P^{\hat{\mathtt{R}}_w}(e),
\theta_{x^{-1}} P^{\hat{\mathtt{R}}_w}(e))=0$ as well.
\end{proof}

\begin{lemma}\label{lemma5}
For any $x\in \mathtt{R}_w$ and $y\in \hat{\mathtt{R}}_w\setminus\mathtt{R}_w$
we have $\theta_xL(y)=0$. In particular,
$[P^{\hat{\mathtt{R}}_w}(e):L(\overline{w})]>0$.
\end{lemma}

\begin{proof}
As $P^{\hat{\mathtt{R}}_w}(y)\tto L(y)$ and $\theta_x$ is exact, we have
$\theta_x P^{\hat{\mathtt{R}}_w}(y)\tto \theta_x L(y)$. Applying \eqref{eq2}
we even have that $\theta_x L(y)$ is a homomorphic image of the module
$\mathrm{Z}^{\hat{\mathtt{R}}_w}\theta_x\theta_y \Delta(e)$.

Note that $\theta_x L(y)\in \mathcal{O}_0^{\hat{\mathtt{R}}_y}$,
in particular, all simple subquotients of $\theta_x L(y)$ have the form
$L(z)$, $z\in \hat{\mathtt{R}}_y$.

On the other hand, it follows from \cite[(4.1)]{MS} that
$\theta_x\theta_y$ is a direct sum of functors of the form
$\theta_z$, where $z\geq_{\mathtt{L}} x$. Hence, by \eqref{eq3}, all
simple quotients of the module
$\mathrm{Z}^{\hat{\mathtt{R}}_w}\theta_x\theta_y \Delta(e)$ have the
form $L(x)$. As $x\not\in \hat{\mathtt{R}}_y$ by our choice of $y$,
we must have  $\theta_x L(y)=0$.

We know that $P^{\hat{\mathtt{R}}_w}(\overline{w})=
\theta_{\overline{w}}P^{\hat{\mathtt{R}}_w}(e)\neq 0$. By
Lemma~\ref{lemma4} and the above,  $L(\overline{w})$ is the only
subquotient of $P^{\hat{\mathtt{R}}_w}(e)$ which has the chance not to
be annihilated by $\theta_{\overline{w}}$. Altogether we must have
$[P^{\hat{\mathtt{R}}_w}(e):L(\overline{w})]>0$
\end{proof}

\begin{lemma}\label{lemma6}
$[P^{\hat{\mathtt{R}}_w}(e):L(\overline{w})]=1$.
\end{lemma}

\begin{proof}
Assume for a moment that $\mathtt{R}_w$ contains an element of the
form $w'_0w_0$, where $w_0$ is the longest element of $S_n$ and
$w'_0$ is the longest element of some parabolic (Young) subgroup $W$
of $S_n$. Let $S$ be the set of simple reflections in $W$.
Then the modules $P^{\hat{\mathtt{R}}_w}(x)$, $x\in
\mathtt{R}_w$, are exactly the indecomposable projective-injective
modules in the parabolic subcategory $\mathcal{O}_0^S$ (in the sense
of \cite{RC}) of $\mathcal{O}_0$ (\cite[Remark~14]{MS}). Amongst the
indecomposable projective-injective modules in $\mathcal{O}_0^S$
there is, due to \cite[3.1]{IS}, a special one which is obtained as
a translation of some simple projective module (out of possibly
several walls). Since translation to walls maps simple modules to
simples or zero, the special module, call it $P$, is thus obtained
as a translation of some $L(x)$ for some $x\in \mathtt{R}_w$.

From \cite[Theorem~1]{KMS} it further follows that translating $P$
and taking appropriate direct summands, we will finally get all
$P^{\hat{\mathtt{R}}_w}(x)$, $x\in \mathtt{R}_w$. This implies the
existence of an indecomposable projective functor $\theta_y$ such
that the module $\theta_y L(\overline{w})$ contains
$P^{\hat{\mathtt{R}}_w}(\overline{w})$ as a direct summand (see
\cite[5.1]{MS}). By \cite[Theorem~18]{MS}, the above restriction
that the right cell should contain $w'_0w_0$ is in fact superfluous.
Moreover, from \cite[Theorem~18]{MS} it also follows that the module
$P^{\hat{\mathtt{R}}_w}(\overline{w})$ is an injective object in
$\cO^{\hat{\mathtt{R}}_w}$ (and so the same holds for any
$P^{\hat{\mathtt{R}}_w}(x)$, $x\in \mathtt{R}_w$).

Consider now $\theta_y P^{\hat{\mathtt{R}}_w}(e)\cong
P^{\hat{\mathtt{R}}_w}(y)$. As $P^{\hat{\mathtt{R}}_w}(\overline{w})$
is both projective and injective, from  Lemma~\ref{lemma5} it follows that
$P^{\hat{\mathtt{R}}_w}(\overline{w})$ must be a direct summand of
$P^{\hat{\mathtt{R}}_w}(y)$. As $P^{\hat{\mathtt{R}}_w}(y)$ is
indecomposable, this forces $P^{\hat{\mathtt{R}}_w}(y)\cong
P^{\hat{\mathtt{R}}_w}(\overline{w})$, $y=\overline{w}$,
and finally $[P^{\hat{\mathtt{R}}_w}(e):L(\overline{w})]=1$.
\end{proof}

From Lemma~\ref{lemma5} and Lemma~\ref{lemma6} it follows that for
any $x\in \mathtt{R}_w$ we have $\theta_x
P^{\hat{\mathtt{R}}_w}(e)\cong \theta_x L(\overline{w})$. This
finally proves the key statement and at the same time completes the
proof of Proposition~\ref{prop3} and Theorem~\ref{tmain}.
\end{proof}

\begin{remark}\label{parabolics}
{\rm
Let $w\in S_n$ be such that the right cell $\mathtt{R}_w$ contains
the element $w'_0w_0$ for some Young subgroup $W'$ of $S_n$. Then
$\mathcal{O}_0^{\hat{\mathtt{R}}_w}$ is the regular block of the
parabolic category $\mathcal{O}$ (in the sense of \cite{RC})
associated with $W'$. The elements $f_x$, $x\not\leq_{\mathtt{R}}w$,
form a basis of a submodule $N$ of $\mathbb{C}[S_n]$. The
quotient $\mathbb{C}[S_n]/N$ is isomorphic to the induced sign module
$\mathbb{C}[S_n]\otimes_{\mathbb{C}[W]}\mathrm{sign}$
(see \cite[6.2.1]{MS} for details) with the classes of the elements
$f_x$, $x\leq_{\mathtt{R}}w$ forming a basis. Alternatively, the elements $f_x$, $x\leq_{\mathtt{R}}w$,
form a basis of a submodule of $\mathbb{C}[S_n]$ which is isomorphic
to the induced sign module.
}
\end{remark}

\section{Proof of Theorem~\ref{tmain2}}\label{s25}

Using \eqref{eq01} and \eqref{eq1} we interpret
$h^i_{(x,y)}=\varphi([\theta_y L(x^{-1})])$ for each $i\in I$
and $(x,y)\in\mathtt{R}_i\times \mathtt{R}_i$. Let $i\in I$ be fixed.
Because of Proposition~\ref{prop3} and the definition of $g_w$'s,
to prove Theorem~\ref{tmain2} it is enough to show that every
$\theta_y L(x^{-1})$ is a projective-injective module in
$\mathcal{O}_0^{\hat{\mathtt{R}}_{x^{-1}}}$. In the case $x=\overline{y}$
this follows from the Key statement of Section~\ref{s2}.

Let now $x\in \mathtt{R}_i$ be arbitrary. As $x$ and $\overline{y}$
belong to the same right cell, the elements $x^{-1}$ and $\overline{y}$
belong to the same left cell. Let $\mathscr{A}$ and $\mathscr{B}$
denote the additive categories of projective-injective modules in
$\mathcal{O}_0^{\hat{\mathtt{R}}_{\overline{y}}}$ and
$\mathcal{O}_0^{\hat{\mathtt{R}}_{x^{-1}}}$ respectively.
In \cite[Section~5]{MS} it was shown that
there exists an equivalence
$\mathrm{F}:\mathscr{A}\rightarrow \mathscr{B}$ which commutes with
projective functors and satisfies
$\mathrm{F}(P^{\hat{\mathtt{R}}_{\overline{y}}}(\overline{y}))=
P^{\hat{\mathtt{R}}_{x^{-1}}}(x^{-1})$.

Let $\overline{\mathscr{A}}$ and $\overline{\mathscr{B}}$ denote
the full subcategories of respectively  $\mathcal{O}_0^{\hat{\mathtt{R}}_{\overline{y}}}$
and $\mathcal{O}_0^{\hat{\mathtt{R}}_{x^{-1}}}$ which consist
of all modules $X$ having a two step presentation $M_1\to M_0\to X\to 0$,
where $M_1,M_0\in \mathscr{A}$ or $M_1,M_0\in \mathscr{B}$ respectively.
Then $\mathrm{F}$ extends, in the obvious way, to an equivalence
$\overline{\mathrm{F}}:\overline{\mathscr{A}}\rightarrow
\overline{\mathscr{B}}$ which commutes with projective functors.

Let $\overline{L(\overline{y})}$ denote the quotient
of $P^{\hat{\mathtt{R}}_{\overline{y}}}(\overline{y})$
modulo the trace of all modules from $\mathscr{A}$ in the radical of
$P^{\hat{\mathtt{R}}_{\overline{y}}}(\overline{y})$.
Define $\overline{L(x^{-1})}$ analogously. Then
$\overline{L(\overline{y})}$ has simple top $L(\overline{y})$ and all
other subquotients of $\overline{L(\overline{y})}$ are of the form
$L(z)$, where  $z<_{\mathtt{R}}\overline{y}$.
Analogously $\overline{L(x^{-1})}$ has simple top $L(x^{-1})$ and all
other subquotients of $\overline{L(x^{-1})}$ are of
the form $L(z)$, where $z<_{\mathtt{R}}x^{-1}$.
From the above construction we have
$\overline{\mathrm{F}}(\overline{L(\overline{y})})=
\overline{L(x^{-1})}$. Further
$\theta_y \overline{L(\overline{y})}=\theta_y L(\overline{y})$ by
Lemma~\ref{lemma5}. Analogous arguments imply
$\theta_y \overline{L(x^{-1})}=\theta_y L(x^{-1})$. Adding everything up
we have
\begin{displaymath}
\theta_y L(x^{-1})=\theta_y \overline{L(x^{-1})}=
\theta_y \overline{\mathrm{F}}(\overline{L(\overline{y})})=
\overline{\mathrm{F}}(\theta_y \overline{L(\overline{y})})=
\overline{\mathrm{F}}(\theta_y L(\overline{y}))=
\mathrm{F}(\theta_y L(\overline{y})).
\end{displaymath}
Hence $\theta_y L(x^{-1})=\mathrm{F}(\theta_y L(\overline{y}))$
is a projective-injective module in
$\mathcal{O}_0^{\hat{\mathtt{R}}_{x^{-1}}}$. The claim follows.

\section{An application to Kostant's problem}\label{s4}

The core object $\Delta^{\hat{\mathtt{R}}_w}(e)$ of our study in
Section~\ref{s2} has an unexpected application to the so-called Kostant's
 problem from \cite{Jo}; see also \cite[Kapitel~6]{Ja}.

Let $\mathfrak{g}$ be a complex reductive finite-dimensional Lie algebra. For
every $\mathfrak{g}$-module $M$ we have the bimodule $\mathscr{L}(M,M)$ of all
$\mathbb{C}$-linear endomorphisms of $M$ on which the adjoint action of the
universal enveloping algebra $U(\mathfrak{g})$ is locally finite.
(That means
any vector $f\in\mathscr{L}(M,M)$ lies inside a finite dimensional subspace
which is stable under the adjoint action defined as $x.f(m)=x(f(m))-f(xm)$ for
$x\in\mg$, $m\in M$).
Initiated by \cite{Jo}, {\it Kostant's problem}  became
the standard terminology  for the following question concerning an arbitrary
$\mathfrak{g}$-module $M$:
\vspace{1mm}

{\em
Is the natural injection $U(\mathfrak{g})/\mathrm{Ann}(M)\hookrightarrow
\mathscr{L}(M,M)$
surjective?
}
\vspace{2mm}

Although there are several classes of modules for which the answer is known to
be positive (see \cite{Jo}, \cite{Ma}, \cite{MS} and references therein), a
complete answer to this problem seems to be far away - the problem is not even
solved for simple highest weight modules. In \cite[9.5]{Jo} an example of a
simple highest weight module in type $B_2$, for which the answer is negative is
mentioned (for details see \cite[11.5]{MS}). In this section we use the module
$\Delta^{\hat{\mathtt{R}}_xw}(e)$ to construct another example in type $A_3$,
which disproves a general belief that the answer to Kostant's problem is
positive for simple highest weight modules in type  $A$ (this belief was based
on \cite[9.1]{Jo} and further strengthened by \cite[Theorem~60]{MS}).

Let $n=4$ and $r=(12)$, $s=(23)$, $t=(34)$ be the standard Coxeter generators
of $S_4$. Consider $w=rt=\overline{w}$. In this case  we have
$\mathtt{R}_w=\{rt,rts\}$ and
$\hat{\mathtt{R}}_w=\{rt,rts,t,ts,tsr,r,rs,rst,e\}$. We consider the graded
version of $\mathcal{O}$ as worked out in \cite{St2}. Using \cite[Appendix]{St}
one computes that the module $N=\Delta^{\hat{\mathtt{R}}_w}(e)$ has the
following graded filtration (resp. socle or radical filtration), where we abbreviate $L(x)$ simply by $x$:
\begin{displaymath}
\begin{array}{ccccc}
 &&&e& \\
N= &&r&&t \\
 &&&rt&
\end{array}
\end{displaymath}

\begin{lemma}\label{lemma10}
$\mathrm{Ann}(L(rt))=\mathrm{Ann}(N)$
\end{lemma}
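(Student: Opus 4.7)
The inclusion $\Ann(N) \subseteq \Ann(L(rt))$ is immediate from the displayed radical filtration: $L(rt)$ is the simple socle of $N$, so $L(rt) \hookrightarrow N$ and any two-sided ideal of $U(\mathfrak{g})$ annihilating $N$ must annihilate the submodule $L(rt)$.

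For the reverse inclusion, the plan is to embed $N$ into a module whose annihilator is visibly $\Ann(L(rt))$. Since $L(rt)$ is the simple socle of $N$ with multiplicity one (Lemma~\ref{lemma6}), $N$ embeds into the injective envelope of $L(rt)$ inside $\mathcal{O}_0^{\hat{\mathtt{R}}_w}$. By the Key Statement of Section~\ref{s2} together with \cite[Theorem~18]{MS}, this envelope is an indecomposable projective-injective module of the form $\theta_y\,L(rt) = P^{\hat{\mathtt{R}}_w}(y)$ for a suitable $y\in\mathtt{R}_w$. One therefore obtains
\[
\Ann\bigl(\theta_y\,L(rt)\bigr) \;\subseteq\; \Ann(N) \;\subseteq\; \Ann(L(rt)),
\]
so the lemma reduces to verifying the identity $\Ann(\theta_y\,L(rt)) = \Ann(L(rt))$.

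The inclusion $\Ann(\theta_y\,L(rt)) \subseteq \Ann(L(rt))$ is clear because $L(rt)$ occurs as a composition factor of $\theta_y\,L(rt)$. \textbf{The main obstacle}, carrying the real content of the proof, is the reverse containment $\Ann(L(rt)) \subseteq \Ann(\theta_y\,L(rt))$: one must show that $\Ann(L(rt))$ does not merely kill every composition factor of $\theta_y\,L(rt)$ but the whole module (composition-factor bookkeeping by itself yields only $\Ann(L(rt))^k \cdot (\theta_y L(rt)) = 0$ for $k$ equal to the Loewy length). My plan is to appeal to the classical principle of Bernstein--Gelfand and Joseph that a projective endofunctor on the principal block of $\mathcal{O}$ preserves the annihilator of a simple highest-weight module on which it acts nontrivially, which in our type-$A$ situation rests on Joseph's classification of primitive ideals in $U(\mathfrak{sl}_n)$ by left cells, combined with the compatibility between the right Hecke-algebra action on cell modules and projective functors (cf.~\eqref{eq1}). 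A fallback, should this invocation feel too much of a black box, is to transport the question along the cell equivalences of \cite[Section~5]{MS} from $\mathcal{O}_0^{\hat{\mathtt{R}}_w}$ into a genuine parabolic subcategory --- attached to a right cell in the same two-sided cell as $\mathtt{R}_{rt}$ that does contain an element of the form $w_0' w_0$ --- where it is classical that the annihilator of the basic indecomposable projective-injective coincides with the annihilator of the Duflo simple, and then to pull the identity back along the equivalence.
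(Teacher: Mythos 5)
The easy half of your argument is fine, and the embedding of $N$ into the injective hull of its simple socle $L(rt)$ inside $\mathcal{O}_0^{\hat{\mathtt{R}}_w}$ (which is indeed of the form $\theta_y L(rt)\cong P^{\hat{\mathtt{R}}_w}(y)$) is plausible. But the step carrying all the weight, namely $\mathrm{Ann}(L(rt))\subseteq \mathrm{Ann}(\theta_y L(rt))$, is supported only by an appeal to a ``classical principle'' that does not exist: projective functors (i.e.\ tensoring with finite-dimensional modules and projecting) do \emph{not} preserve annihilators of the simple highest weight modules they fail to kill. Already for $\mathfrak{g}=\mathfrak{sl}_2$ one has $\theta_s L(s)\cong P(s)$, and the element $C-\chi(C)$ (Casimir minus its eigenvalue) annihilates $L(s)$ but acts as a nonzero nilpotent endomorphism of $P(s)$; hence $\mathrm{Ann}(L(s))\not\subseteq\mathrm{Ann}(\theta_s L(s))$. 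Joseph's cell description of primitive ideals compares annihilators of \emph{simple} modules only and says nothing about the non-semisimple module $\theta_y L(rt)$. Your fallback rests on an equally false ``classical fact'': for the cell $\{w_0\}$ it would assert $\mathrm{Ann}(P(w_0))=\mathrm{Ann}(L(w_0))$, which fails for the same central-character reason (by Soergel's Endomorphismensatz the center acts non-semisimply on $P(w_0)$). Since $\theta_y L(rt)\cong P^{\hat{\mathtt{R}}_w}(y)$ is precisely such a projective-injective module with non-scalar endomorphism ring, the identity you reduce the lemma to is not only unproven but should be expected to fail; what your chain of inclusions actually yields, $\mathrm{Ann}(\theta_y L(rt))\subseteq\mathrm{Ann}(N)\subseteq\mathrm{Ann}(L(rt))$, is just the trivial half again.

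The paper avoids this trap by enlarging $L(rt)$ not with a projective functor but with an Ore localization: since $rt>r$ and $rt>t$, the negative root vectors $Y_r,Y_t$ act injectively on $L(rt)$, and $N$ embeds into $N'=U'\otimes_{U(\mathfrak{sl}_4)}L(rt)$, where $U'$ is the localization at $\{Y_r^iY_t^j\}$. For the two-sided ideal $I=\mathrm{Ann}(L(rt))$ one has $IU'=U'I$, so $IN'=U'I\,L(rt)=0$, giving $\mathrm{Ann}(L(rt))\subseteq\mathrm{Ann}(N')\subseteq\mathrm{Ann}(N)$ (Lemma~\ref{l15}). The essential point is that localization manifestly enlarges annihilators, whereas translation functors do not --- and that is exactly the property your proposed route is missing.
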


\begin{proof}
Let $Y_r$ and $Y_t$ denote some non-zero elements from the negative root spaces
corresponding to $r$ and $t$ respectively. Let further $U'$ be the localization
of $U(\mathfrak{sl}_4)$ with respect to the multiplicative set
$\{Y_r^iY_t^j\vert i,j\geq 0\}$.  As $rt>r$ and $rt>t$ with respect to the
Bruhat order, both $Y_r$ and $Y_t$ act injectively on $L(rt)$. Hence $L(rt)$
will be the simple socle of the $\mathfrak{sl}_4$-module
$N'=U'\otimes_{U(\mathfrak{sl}_4)}L(rt)$. As $t>e$ it is further easy to see
(for example using the results of \cite[Section~4]{KM}) that $N$ is a submodule
of $N'$. Hence the statement of the lemma would follow if we would prove that
$\mathrm{Ann}(L(rt))=\mathrm{Ann}(N')$. In fact, as $L(rt)\subset N'$, we have
only to prove that $\mathrm{Ann}(L(rt))\subset \mathrm{Ann}(N')$. This however,
follows from the following statement:

\begin{lemma}\label{l15}
Let $\mathfrak{g}$ be a semi-simple finite-dimensional Lie algebra, $0\neq x\in
\mathfrak{g}$ some root vector, and $M$ a $\mathfrak{g}$-module on which  $x$
acts injectively. Let $U'$ be the localization of $U(\mathfrak{g})$ with
respect to the powers of $X$. Then $\mathrm{Ann}(M)\subset \mathrm{Ann}(M')$,
where $M'=U'\otimes_{U(\mathfrak{g})}M$.
\end{lemma}

\begin{proof}
The set $X:=\{x^i\mid i\geq0\}$ is an Ore set in $\cU(\mathfrak{g})$ with
$X\cap\mathrm{Ann}(M)=\emptyset$ by hypothesis. So
$U'\mathrm{Ann}(M)=\mathrm{Ann}(M)U'$ is a proper ideal in $U'$. This means
$\mathrm{Ann}(M)M'=\mathrm{Ann}(M)U'M=U'\mathrm{Ann}(M)M=\{0\}$.
This completes the proof.
\end{proof}
The proof of Lemma~\ref{lemma10} is now complete.
\end{proof}

\begin{lemma}\label{lemma11}
\begin{enumerate}[(a)]
\item \label{lemma11.1} The module $\theta_t\theta_s\theta_r N$ has
the following graded filtration:
\begin{displaymath}
\begin{array}{ccccc}
&rst&&& \\
rs&&rt&& \\
rst&tsr&trs&r&\\
&&rt&&
\end{array}
\end{displaymath}
\item \label{lemma11.2} The module $\theta_t\theta_s\theta_r L(rt)$ is
a submodule of the module $\theta_t\theta_s\theta_r N$ and has
the following graded filtration:
\begin{displaymath}
\begin{array}{ccccc}
&&rt&& \\
&tsr&trs&r&\\
&&rt&&
\end{array}
\end{displaymath}
\end{enumerate}
\end{lemma}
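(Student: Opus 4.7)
The plan is to work in the graded version of $\mathcal{O}_0$ from \cite{St2}, where projective functors admit graded lifts with explicit degree shifts, and to compute both sides of the two statements by applying the graded lifts of $\theta_r$, $\theta_s$, $\theta_t$ one simple reflection at a time to the given data.

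For part (a), I would use exactness of each $\theta_x$ together with the standard graded structure of $\theta_x L(y)$ for a simple reflection $x\in\cS$: if $x$ lies in the right descent set of $y$ the result is zero, and otherwise $\theta_x L(y)$ is a length three graded module with top $L(y)\langle 1\rangle$, socle $L(y)\langle -1\rangle$, and middle layer a direct sum of $L(z)\langle 0\rangle$'s whose multiplicities are the Kazhdan--Lusztig $\mu$-coefficients. All $\mu$-values and the graded Kazhdan--Lusztig polynomials needed for $S_4$ are explicitly available from the tables in \cite[Appendix]{St}. Starting from the four composition factors $L(e),L(r),L(t),L(rt)$ of $N$ in their known graded positions, iterating $\theta_r$, then $\theta_s$, then $\theta_t$ yields the asserted five-row graded module in the Grothendieck group; this must of course agree with $[N]\cdot \mathrm{ev}(\underline{H}_r\underline{H}_s\underline{H}_t)$ via \eqref{eq1}, which is the primary sanity check.

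The second step for (a) is to upgrade the Grothendieck-group identity to an honest graded filtration. Here one pins down the extensions by exploiting that $N=\Delta^{\hat{\mathtt{R}}_w}(e)$ is itself indecomposable, that projective functors preserve indecomposability of projective covers and commute with the graded shift, and that $\mathrm{Ext}^1$ between the relevant graded simples vanishes in all but the expected degrees (again extractable from \cite{St}); together these force the claimed non-split gluings layer by layer.

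For part (b), I would first observe that $L(rt)$ is the socle of $N$: this is visible from the displayed graded filtration, and also from the proof of Lemma~\ref{lemma10}, where $L(rt)$ is identified as the simple socle of the localization $N'\supset N$. Applying the exact functor $\theta_t\theta_s\theta_r$ to the inclusion $L(rt)\hookrightarrow N$ gives the claimed inclusion into the module computed in (a), and the graded filtration of $\theta_t\theta_s\theta_r L(rt)$ is computed by repeating the procedure of (a) but starting from the single simple $L(rt)$; as a consistency check the composition factors of $\theta_t\theta_s\theta_r(N/L(rt))$ must account for exactly the difference between the two displayed diagrams. The main obstacle is computational rather than conceptual: each application of a $\theta_x$ potentially triples the number of graded layers and introduces many candidates for extensions and cancellations, so the real labour lies in organising the $\mu$-coefficients, graded Kazhdan--Lusztig polynomials, and indecomposability arguments into the tidy five- and three-row Loewy diagrams asserted above, and this is where the explicit $S_4$ data of \cite[Appendix]{St} becomes essential.
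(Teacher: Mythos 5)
Your overall route is the same as the paper's: the paper's proof of this lemma is literally ``verified by direct computations'', and what you propose --- apply graded lifts of $\theta_r$, $\theta_s$, $\theta_t$ one at a time to the known graded picture of $N$, using the $S_4$ Kazhdan--Lusztig data from \cite[Appendix]{St} and the graded translation combinatorics of \cite{St2}, and deduce (b) by applying the exact functor $\theta_t\theta_s\theta_r$ to the socle inclusion $L(rt)\hookrightarrow N$ --- is exactly that computation. The problem is that the computational rule you state is inverted, and the whole calculation hinges on it: for a simple reflection $x$ one has $\theta_x L(y)=0$ precisely when $x$ is \emph{not} a right descent of $y$ (i.e.\ $yx>y$), while for $yx<y$ the module $\theta_x L(y)$ is the self-dual module with top and socle $L(y)$ (up to shift) and middle layer governed by $\mu$-coefficients. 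You assert the opposite. With your rule, $\theta_r L(rt)=0$ (since $r$ \emph{is} a right descent of $rt$, as $rtr=t$), so $\theta_t\theta_s\theta_r L(rt)=0$, flatly contradicting part (b); and $\theta_r L(e)\neq 0$, which would feed spurious $L(e)$-contributions into the computation and destroy the vanishing $[\theta_t\theta_s\theta_r N:L(e)]=0$ that the proof of Theorem~\ref{tkostant} relies on. The correct direction is forced by the dictionary $\varphi([\theta_x L(y)])=\mathrm{ev}(\hat{\underline{H}}_y\underline{H}_x)$: already in rank one, $\hat{\underline{H}}_e\underline{H}_s=(H_e-vH_s)(H_s+vH_e)=0$, matching $\theta_s L(e)=0$ for the finite-dimensional simple module.

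Two smaller points. First, your description of the middle layer is also imprecise: only those $L(z)$ with $zx>z$ occur there (with multiplicity the symmetrized $\mu(z,y)$); quoting the $\mu$-coefficients without this restriction overcounts composition factors. Second, the diagram in (a) has four layers, not five, which together with the inverted rule suggests the computation was planned but not actually executed. Your proposed sanity check against $[N]\cdot\mathrm{ev}(\underline{H}_r\underline{H}_s\underline{H}_t)$ via \eqref{eq1} would indeed expose the inversion, and once the rules are corrected your plan --- including the extra bookkeeping (indecomposability, self-duality of $\theta_x$ applied to simples, degreewise $\mathrm{Ext}$-arguments) needed to upgrade graded characters to the displayed filtrations, which is where the genuine labour sits --- does lead to the asserted diagrams; but as written the stated engine would fail at the very first wall-crossing.
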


\begin{proof}
This is verified by direct computations.
\end{proof}

\begin{theorem}\label{tkostant}
Kostant's problem has a negative answer for $L(rt)$.
\end{theorem}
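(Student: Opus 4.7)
The plan is to reformulate Kostant's problem via projective functors and derive a contradiction from the composition-factor data in Lemma~\ref{lemma11}. For any $\mathfrak{g}$-module $M$ and finite-dimensional $\mathfrak{g}$-module $V$, tensor-Hom adjunction combined with the fact that any $\mathfrak{g}$-equivariant map from $V$ factors through the locally finite part of $\mathrm{Hom}_{\mathbb{C}}(M,M)$ yields the natural isomorphism
\[
\mathrm{Hom}_{\mathfrak{g}}(V\otimes M,\,M) \;\cong\; \mathrm{Hom}_{\mathfrak{g}}(V,\,\mathscr{L}(M,M)).
\]
Since every projective functor $\theta$ is a direct summand of some $V\otimes -$, and $\mathscr{L}(M,M)$ decomposes under the adjoint action into finite-dimensional isotypic components, it follows that $\dim\mathrm{Hom}_{\mathfrak{g}}(\theta M, M)$ records multiplicities in the adjoint decomposition of $\mathscr{L}(M,M)$.

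Suppose for contradiction that Kostant's problem has a positive answer for $L(rt)$. Then $\mathscr{L}(L(rt),L(rt)) = U(\mathfrak{g})/\mathrm{Ann}(L(rt))$, which by Lemma~\ref{lemma10} equals $U(\mathfrak{g})/\mathrm{Ann}(N)$, and the latter embeds into $\mathscr{L}(N,N)$. The resulting bimodule inclusion $\mathscr{L}(L(rt),L(rt)) \hookrightarrow \mathscr{L}(N,N)$ is equivariant for the adjoint $\mathfrak{g}$-action, so it preserves adjoint-isotypic components and, translated through the adjunction above, yields the dimension inequality
\[
\dim\mathrm{Hom}_{\mathfrak{g}}(\theta L(rt),\,L(rt)) \;\le\; \dim\mathrm{Hom}_{\mathfrak{g}}(\theta N,\,N)
\]
for every projective functor $\theta$.

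To produce a contradiction I take $\theta = \theta_t\theta_s\theta_r$ and compute both sides from Lemma~\ref{lemma11}. By Lemma~\ref{lemma11}(b) the top of $\theta L(rt)$ is $L(rt)$, so projecting onto the top gives $\dim\mathrm{Hom}(\theta L(rt),L(rt)) \ge 1$. By Lemma~\ref{lemma11}(a) the top of $\theta N$ is the simple module $L(rst)$; hence the image of any nonzero morphism $\varphi\colon\theta N\to N$, being a nonzero quotient of $\theta N$, has top equal to $L(rst)$ and in particular contains $L(rst)$ as a composition factor. But the composition factors of $N$ are $L(e), L(r), L(t), L(rt)$, none of which is $L(rst)$. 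Hence $\mathrm{Hom}(\theta N, N) = 0$, yielding the required contradiction $1 \le 0$.

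The main conceptual step is the first: converting the surjectivity in Kostant's problem, via Lemma~\ref{lemma10}, into the bimodule inclusion $\mathscr{L}(L(rt),L(rt))\hookrightarrow\mathscr{L}(N,N)$. Once this reformulation is in place, the remaining verification amounts to reading off the simple tops of $\theta L(rt)$ and $\theta N$ from Lemma~\ref{lemma11} and observing that $L(rst)$ is not a composition factor of $N$.
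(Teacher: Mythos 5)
Your strategy is essentially the paper's: use Lemma~\ref{lemma10} to identify $\mathrm{Ann}(L(rt))$ with $\mathrm{Ann}(N)$, and then distinguish $\mathscr{L}(L(rt),L(rt))$ from $U(\mathfrak{sl}_4)/\mathrm{Ann}(L(rt))$ by testing against $\theta=\theta_t\theta_s\theta_r$ via Lemma~\ref{lemma11}. A nice touch is that you never need Kostant positivity for $N$ (\cite[6.9]{Ja}): the canonical injection $U(\mathfrak{sl}_4)/\mathrm{Ann}(N)\hookrightarrow\mathscr{L}(N,N)$ already yields the comparison you use. But one step is under-justified. Adjoint isotypic multiplicities of $\mathscr{L}(M,M)$ compute $\dim\mathrm{Hom}_{\mathfrak g}(V\otimes M,M)$ for finite-dimensional $V$; an individual projective functor is only a direct summand of some $V\otimes{-}$, and how $\mathrm{Hom}(V\otimes M,M)$ distributes over the summands is not controlled by the adjoint structure alone (the identity functor typically occurs among the summands and $\mathrm{Hom}(N,N)\neq 0$, so the $V$-level inequality is vacuous here). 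Your per-$\theta$ inequality is nevertheless correct because your inclusion is one of $U$-bimodules: for $M$ with central character $\chi_0$ one has $\theta M\cong B_\theta\otimes_U M$ with $B_\theta=\theta\bigl(U/U\ker\chi_0\bigr)$ a Harish-Chandra bimodule, hence $\mathrm{Hom}_{\mathfrak g}(\theta M,M)\cong\mathrm{Hom}_{U\text{-}\mathrm{bimod}}(B_\theta,\mathscr{L}(M,M))$, and left exactness of $\mathrm{Hom}(B_\theta,{-})$ gives the inequality. This refinement, also implicit in the paper's step ``$\mathscr{L}(N,N)\neq\mathscr{L}(L(rt),L(rt))$'', should be made explicit, since the justification you wrote does not deliver it.

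The genuine gap is in the vanishing of $\mathrm{Hom}(\theta N,N)$. You read off from Lemma~\ref{lemma11}(a) that ``the top of $\theta N$ is $L(rst)$'', but that lemma records a graded filtration, which determines composition factors, not the head; and you really do need the head here, because $L(rt)$ and $L(r)$ are composition factors of both $\theta N$ and $N$, so a single occurrence of either in the head of $\theta N$ would destroy the argument. The claim is true, but requires an argument: for instance, $\underline{H}_r\underline{H}_s\underline{H}_t=\underline{H}_{rst}$, so $\theta\Delta(e)\cong P(rst)$, and $\theta N$, being a nonzero quotient of $\theta\Delta(e)$, inherits the simple head $L(rst)$. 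Alternatively, run your inequality with the adjoint direction $\mathrm{Hom}(M,\theta M)$ (equivalently with the adjoint projective functor $\theta_r\theta_s\theta_t$), as the paper does: then the $N$-side vanishing needs only that $N$ has simple head $L(e)$ (it is a quotient of $\Delta(e)$) together with $[\theta N:L(e)]=0$, which the filtration does give, while the nonvanishing on the $L(rt)$-side comes from $L(rt)$ lying in the socle of $\theta L(rt)$, visible in the bottom layer of the filtration in Lemma~\ref{lemma11}(b).
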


\begin{proof}
As $N$ is a quotient of the dominant Verma module, Kostant's problem has a
positive solution for $N$ by \cite[6.9]{Ja}. Hence $\mathscr{L}(N,N)=
U(\mathfrak{sl}_4)/\mathrm{Ann}(N)$. By Lemma~\ref{lemma10}, we have
$\mathrm{Ann}(N)=\mathrm{Ann}(L(rt))$ and hence we also have
$U(\mathfrak{sl}_4)/\mathrm{Ann}(N)=U(\mathfrak{sl}_4)/\mathrm{Ann}(L(rt))$.
From Lemma~\ref{lemma11}\eqref{lemma11.1} we obtain that $\dim
\mathrm{Hom}_{\mathcal{O}}(N,\theta_t\theta_s\theta_r N)=0$ (as for the top
$L(e)$ of $N$ we have $[\theta_t\theta_s\theta_r N:L(e)]=0$), while
$\dim\mathrm{Hom}_{\mathcal{O}}(L(rt),\theta_t\theta_s\theta_r L(rt))\neq 0$
by  Lemma~\ref{lemma11}\eqref{lemma11.2} (as
$L(rt)$ obviously occurs in the socle of $\theta_t\theta_s\theta_r L(rt)$).
This implies $\mathscr{L}(N,N)\not=\mathscr{L}(L(rt),L(rt))$, which, in turn,
yields $\mathscr{L}(L(rt),L(rt))\not= U(\mathfrak{sl}_4)/\mathrm{Ann}(L(rt))$.
The claim follows.
\end{proof}



\begin{thebibliography}{999999}
\bibitem[Al1]{Al1}
E.~Allen, The decomposition of a bigraded left regular representation of
the diagonal action of $S\sb n$.  J. Combin. Theory Ser. A  {\bf 71}
(1995),  no. 1, 97--111.
\bibitem[Al2]{Al2}
E.~Allen, A conjecture of Procesi and a new basis for the decomposition
of the graded left regular representation of $S\sb n$.  Adv. Math. {\bf 100}
(1993),  no. 2, 262--292.
\bibitem[BeBe]{BeBe}
A.~Be\u\i linson, J.~Bernstein, Localisation de $g$-modules.
C. R. Acad. Sci. Paris Sér. I Math.  {\bf 292}  (1981), no. 1, 15--18.
\bibitem[BG]{BG}
J.~Bernstein, S.~Gelfand, Tensor products of finite- and
infinite-dimensional representations of semisimple Lie algebras.
Compositio Math.  {\bf 41}  (1980), no. 2, 245--285.
\bibitem[BGG]{BGG}
J.~Bernstein, I.~Gelfand, S.~Gelfand, A certain category of
${\mathfrak g}$-modules. Funkcional. Anal. i Prilozen.  {\bf 10}
(1976), no. 2, 1--8.
\bibitem[BjBr]{BjBr}
A.~Bj{\"o}rner, F.~Brenti, Combinatorics of Coxeter groups. Graduate
Texts in Mathematics, {\bf 231}. Springer, 2005.
\bibitem[BK]{BK}
J.-L.~Brylinski, M.~Kashiwara, Kazhdan-Lusztig conjecture and
holonomic systems.  Invent. Math.  {\bf 64}  (1981), no. 3, 387--410.
\bibitem[IS]{IS}
R.~Irving, B.~Shelton, Loewy series and simple projective modules
in the category ${\mathscr O}\sb S$.  Pacific J. Math.  {\bf 132}
(1988),  no. 2, 319--342.
\bibitem[Ja]{Ja}
J. C. Jantzen, Einh{\"u}llende Algebren halbeinfacher Lie-Algebren.
Ergebnisse der Mathematik und ihrer Grenzgebiete 3. Springer-Verlag,
Berlin, 1983.
\bibitem[Jo]{Jo}
A.~Joseph, Kostant's problem, Goldie rank and the Gelfand-Kirillov
conjecture.  Invent. Math.  {\bf 56}  (1980), no. 3, 191--213.
\bibitem[KL]{KL}
D.~Kazhdan, G.~Lusztig, Representations of Coxeter groups and Hecke
algebras.  Invent. Math.  {\bf 53}  (1979), no. 2, 165--184.
\bibitem[KM]{KM}
O.~Khomenko, V.~Mazorchuk, On Arkhipov's and Enright's functors.
Math. Z.  {\bf 249} (2005),  no. 2, 357--386.
\bibitem[KMS]{KMS}
M.~Khovanov, V.~Mazorchuk, C.~Stroppel, A categorification of integral
Specht modules, Proc. Amer. Math. Soc. {\bf  136}  (2008), 1163--1169.
\bibitem[Lu]{Lu}
G.~Lusztig, Leading coefficients of character values of Hecke algebras.
The Arcata Conference on Representations of Finite Groups (Arcata,
Calif., 1986),  235--262, Proc. Sympos. Pure Math., {\bf 47}, Part 2,
Amer. Math. Soc., Providence, RI, 1987.
\bibitem[Mat1]{Mat1}
A.~Mathas, Matrix units and generic degrees for the Ariki-Koike algebras.
J. Algebra  {\bf 281}  (2004),  no. 2, 695--730.
\bibitem[Mat2]{Mat2}
A.~Mathas, Seminormal forms and Gram determinants for cellular algebras,
preprint math/0604108, to appear in J. Reine Angew. Math.
\bibitem[Maz]{Ma}
V.~Mazorchuk, A twisted approach to Kostant's problem. Glasg. Math.
J. {\bf 47} (2005), no. 3, 549--561.
\bibitem[MS]{MS}
V.~Mazorchuk, C.~Stroppel, Categorification of (induced) cell modules and
the rough structure of generalized Verma modules, arXiv:math/0702811
\bibitem[Mu1]{Mu2}
G.~Murphy, On the representation theory of the symmetric groups and
associated Hecke algebras.  J. Algebra  {\bf 152}  (1992),  no. 2, 492--513.
\bibitem[Mu2]{Mu}
G.~Murphy, The representations of Hecke algebras of type $A\sb n$.
J. Algebra  {\bf 173}  (1995),  no. 1, 97--121.
\bibitem[Ne]{Ne}
M.~Neunh{\"o}ffer, Kazhdan-Lusztig basis, Wedderburn decomposition, and
Lusztig's homomorphism for Iwahori-Hecke algebras.  J. Algebra  {\bf
303}  (2006),  no. 1, 430--446.
\bibitem[RW]{RW}
A.~Ram, H.~Wenzl, Matrix units for centralizer algebras.
J. Algebra {\bf 145} (1992), no. 2, 378--395.
\bibitem[R-C]{RC}
A.~Rocha-Caridi, Splitting criteria for ${\mathfrak g}$-modules
induced from a parabolic and the Bernstein-Gelfand-Gelfand resolution
of a finite-dimensional, irreducible ${\mathfrak g}$-module.
Trans. Amer. Math. Soc. {\bf 262} (1980), no. 2, 335--366.
\bibitem[Sa]{Sa}
B.~Sagan, The symmetric group. Representations, combinatorial
algorithms, and symmetric functions. Second edition. Graduate
Texts in Mathematics, {\bf 203}. Springer-Verlag, New York, 2001.
\bibitem[So]{So}
W.~Soergel, Kazhdan-Lusztig polynomials and a combinatorics for
tilting modules.  Represent. Theory  {\bf 1}  (1997), 83--114.
\bibitem[St1]{St2}
C.~Stroppel, Category ${\mathcal O}$: gradings and translation functors.
J. Algebra  {\bf 268}  (2003),  no. 1, 301--326.
\bibitem[St2]{St}
C.~Stroppel, Category $\mathcal{O}$: quivers and endomorphism rings
of projectives. Represent. Theory {\bf 7} (2003), 322--345.
\end{thebibliography}
\end{document}